\newtheoremstyle{tr}
{3pt}
{3pt}
{\sl}
{}
{\bf}
{.---}
{.5em}
{}
\theoremstyle{tr}
\newtheorem{dref}{Definition}[section] 
\newtheorem{theo}[dref]{Theorem} \newtheorem{prop}[dref]{Proposition}
\renewcommand\Re{\mathop{\rm Re}\nolimits}
\renewcommand\Im{\mathop{\rm Im}\nolimits}
\def\CC{{\mathcal C}}
\def\CD{{\mathcal D}}
\def\CE{{\mathcal E}}
\def\CL{{\mathcal L}}
\def\CO{{\mathcal O}}
\def\CP{{\mathcal P}}
\def\CT{{\mathcal T}}
\def\C{{\mathbb C}}
\def\R{{\mathbb R}}
\title{Non-real eigenvalues for ${\CP\CT}$-symmetric double wells}
\author[A.B.]{Amina Benbernou}
\address{Amina Benbernou, Facult\'{e} des sciences exactes et informatique, Universit\'{e} de Mostaganem, 27000-Mostaganem, Alg\'erie}
\email{abenbernou@yahoo.fr}
\author[N.B.]{Naima Boussekkine}
\address{Naima Boussekkine, Facult\'{e} des sciences exactes et informatique, Universit\'{e} de Mostaganem, 27000-Mostaganem, Alg\'erie}
\email{nboussekkine@yahoo.fr}
\author[N.M.]{Nawal Mecherout}
\address{Nawal Mecherout, Facult\'{e} des sciences exactes et informatique, Universit\'{e} de Mostaganem, 27000-Mostaganem, Alg\'erie}
\email{mecheroutnawel@yahoo.fr}
\author[T.R.]{Thierry Ramond}
\address{Thierry Ramond, Laboratoire de Math\'ematiques d'Orsay (UMR CNRS 8628), Universit\'e Paris Sud 11, 91400 Orsay, France}
\email{thierry.ramond@math.u-psud.fr}
\author[J.S.]{Johannes Sj¨\"ostrand}
\address{Johannes Sj¨\"ostrand,
Institut de Math\'ematiques de Bourgogne (UMR 5584 du CNRS), Universit\'e de Bourgogne, 9 avenue Alain Savary - BP 47870, 21078 Dijon cedex, France}
\email{johannes.sjostrand@u-bourgogne.fr}
\thanks{T.R. and J.S. are  partially supported by the ANR project NOSEVOL ANR 2011 BS 010119 01.}
\begin{document}

\begin{abstract} We study small, $\CP\CT$-symmetric perturbations of
  self-adjoint double-well Schr\"o\-dinger operators in dimension
  $n\ge 1$.  We prove that the eigenvalues stay real for a very small
  perturbation, then bifurcate to the complex plane as the
  perturbation gets stronger.
\end{abstract}

\maketitle

\centerline{In memory of Louis Boutet de Monvel}


\tableofcontents


\section{Introduction}
\label{int}

We study spectral properties of small, $\CP\CT$-symmetric perturbations of self-adjoint double-well Schr\"odinger operators 
\begin{equation}\label{int.1}
P_\varepsilon =-h^2\Delta +V_\varepsilon,
\end{equation}
on $M$, a smooth compact Riemannian manifold of dimension $n$, or
$\R^n$, where the potential is of the form
\begin{equation}\label{int.2}
V_\varepsilon (x)=V_0(x)+i\varepsilon W(x).
\end{equation}
Here $\varepsilon \in {\R}$, $|\varepsilon | \ll 1$,
 $V_0,W\in \CC^\infty (M;{\R})$ and $W$
is bounded. $\Delta $ denotes the Laplace-Beltrami operator on $M$.

The one dimensional-case has been considered in \cite{MeBoRaSj} under
an additional assumption of analyticity, and we concentrate here on
the general $n$-dimensional case, $n>1$. Our new result is more
general, but requires a stronger condition on the size of the
perturbation parameter.

To be more precise, $P_0$ denotes the Friedrichs extension of the differential operator $-h^2\Delta+V_{0}$ from
$\CC_0^\infty (M)$. In the case $M=\R^n$, it is well-known that 
\begin{equation}\label{int.3}
\inf \sigma _\mathrm{ess}(P_0)\ge \liminf_{x\to \infty }V_0(x)=:\alpha,
\end{equation}
or in other words, that the spectrum of $P_0$ in $]-\infty ,\alpha [$ is
purely discrete. This assertion is also true when $M$ is a compact manifold, with $\alpha=+\infty$ in that case.

Since $W$ is bounded, we can define $P_\varepsilon = P_{0} +i\varepsilon W$ as a closed operator with the same
domain as $P_0$, and it is proved in Proposition \ref{app.1} below that the spectrum of $P_{\varepsilon}$ is discrete in the half-plane $\{z\in \C,\ \Re z<\alpha\}$. To fix the ideas, we will assume when $M={\R}^n$ that 
\begin{equation}\label{int.4}
\alpha>0.
\end{equation}
Thus there exists an $h$-independent neighborhood $\CE$ of $E_{0}=0$
in $\C$ such that $P_{0}$ and $P_{\varepsilon}$ have only discrete
spectrum in $\CE$.

We shall also assume that we have an isometry $\iota :M\to M$, different from the
identity, such that
\begin{equation}\label{int.5}
\iota ^2=\mathrm{id},
\end{equation}
and
\begin{equation}\label{int.6}
  V_0\circ \iota =V_0.\end{equation}
We suppose further that $V_{0}$ has a double-well structure at energy $E_{0}=0$, and that the two wells are exchanged by $\iota$. More precisely,  we assume that
\begin{equation}\label{int.7}
V_0^{-1}(]-\infty ,0])=U_{-1}\cup U_1,\quad U_{-1}\cap U_1=\emptyset ,
\end{equation}
where $U_{\pm 1}\subset M$ are non-empty, closed and hence compact in
view of the assumption (\ref{int.4}), and that
\begin{equation}\label{int.8}
\iota (U_{-1})=U_1.
\end{equation}

In Section \ref{la} we review some basic facts
about the Lithner-Agmon metric $V_0(x)_+dx^2$ (cf.~ (\ref{la.4})) and
the corresponding distance $d(x,y)$, which may be degenerate in the
sense that $d(x,y)$ may be zero when $x\ne y$, but which is symmetric
and satisfies the triangle inequality (cf.~(\ref{la.5}), (\ref{la.6}))
and is a locally Lipschitz function (cf.~(\ref{la.7})--(\ref{la.9})). 

Let $\mathrm{diam}_d(U_j)$ denote the diameter of $U_j$ with respect
to $d$. Then the two diameters are equal and we assume that
\begin{equation}\label{int.8.5}
\mathrm{diam}_d(U_j)=0,\ j=\pm 1.
\end{equation}
 
\par To describe the spectrum of $P_{0}$, it is convenient to
introduce two self-adjoint reference operators. Let $\chi _{\pm 1}\in
\CC_0^\infty (M;[0,1])$ have the following properties:
\begin{equation}\label{int.9}
\chi _j=1\hbox{ near }U_j,
\end{equation}
\begin{equation}\label{int.10}
\mathrm{supp\,}\chi _j\subset B(U_j,\delta )=:U_j^\delta 
\end{equation}
where $\delta >0$ is small. Here 
$$
B(U_j,\delta )=\{x\in M;\, d(U_j,x)<\delta  \}.
$$
Put
\begin{equation}\label{int.11}
\widetilde{P}_j=\widetilde{P}_{0,j}=P_0+\lambda \chi _{-j},\ j=\pm 1.
\end{equation}
Here $\lambda >0$ is a constant that we choose large enough so that 
$$
\{x\in M;\, V_0(x)+\lambda \chi _{-j}(x)\le 0 \}=U_j,
$$
and hence the effect of adding $\lambda \chi _{-j}$ to $V_0$ is to
fill the well $U_{-j}$. If we define
\begin{equation}\label{int.12}
{\CP}u=u\circ \iota ,\ u\in L^2(M),
\end{equation}
then ${\CP}$ is unitary on $L^2(M)$ with ${\CP}\ne 1={\CP}^2$
and we have
\begin{equation}\label{int.13}
{\CP}\circ P_0=P_0\circ {\CP},
\end{equation}
\begin{equation}\label{int.14}
{\CP}\circ \widetilde{P}_j=\widetilde{P }_{-j},\ j=\pm 1.
\end{equation}
The last relation implies in particular that $\widetilde{P}_{-1}$ and
$\widetilde{P}_1$ have the same spectrum. 

\par Assume that 
\begin{equation}\label{int.15}
\widetilde{\mu }(h)=o(h)
\end{equation}
is a simple eigenvalue of $\widetilde{P}_1$ (and hence of
$\widetilde{P}_{-1}$), and  that  
\begin{equation}
\exists C_{0},N_{0}>0,\ \sigma(\widetilde{P}_{\pm 1})\ \cap\
]\widetilde{\mu }(h)-h^{N_0}/C_0,\widetilde{\mu}(h)+h^{N_0}/C_0[=\{
\widetilde{\mu }(h) \}.
\end{equation}

\par As we shall review in Section \ref{pmr}, if $\delta >0$ is small
enough, then for $h>0$ small enough, $P_0$ has exactly two eigenvalues
in the interval
$$
]\widetilde{\mu } (h)-h^{N_0}/(2C_0), \widetilde{\mu }
(h)+h^{N_0}/(2C_0) [, 
$$
namely the eigenvalues $\mu (h)\pm |t(h)|$ of the matrix in (\ref{pmr.8}),
$$
\begin{pmatrix} \mu (h) &t(h)\\ \overline{t(h)} &\mu (h)\end{pmatrix} ,
$$
where $\mu (h)\in \mathbb{R}$, $t(h)\in \mathbb{C}$ satisfy for all
$\delta >0$,
$$
\mu (h)=\widetilde{\mu }(h)+\mathcal{ O}_\delta (e^{(\epsilon (\delta
  )-2S_0)/h }),\ \ \epsilon (\delta )\to 0,\ \delta \to 0,
$$
$$
\forall \alpha >0,\ t(h)=\mathcal{ O}_\alpha (e^{(\alpha -S_0)/h}).
$$
Here, the constant $S_{0}$ is the Lithner-Agmon distance between the two wells $U_{\pm 1}$:
\begin{equation}\label{int.15.5}
S_{0}=d(U_{1},U_{-1}).
\end{equation}
As a matter of fact, quite often we also have a lower bound on
$|t(h)|$:
$$
\forall \alpha >0,\ |t(h)|^{-1}=\mathcal{ O}_\alpha (e^{(\alpha +S_0)/h}).
$$
There are nowadays a lot of precise results available on the tunneling coefficient $t(h)$. One may refer for example to \cite{Si}, \cite {HeSj} or to the review paper \cite{Sj} and the references therein.

Concserning the perturbation $W$, we assume also that
\begin{equation}\label{int.16}
W\circ \iota =-W.
\end{equation}
Then ${\CP}P_\varepsilon =P_{-\varepsilon }{\CP}$, where we also
remark that $P_{-\varepsilon} =P_\varepsilon ^*$. 
Now if we denote by ${\CT}$ the anti-linear operator defined by
\begin{equation}
{\CT}u(x)=\overline{u(\bar x)},
\end{equation}
we see that ${\CT}P_\varepsilon
=P_{-\varepsilon }{\CT}$, so that  $P_\varepsilon $ is ${
\CP\CT}$-symmetric:
\begin{equation}\label{int.17}
{\CP\CT}P_\varepsilon =P_\varepsilon {\CP\CT}.
\end{equation}

The main result of this paper is the following 

\begin{theo}\label{int.18} Under the above assumptions, the operator $P_{\varepsilon}$ has exactly two eigenvalues  (counted with their algebraic multiplicity) in 
$D(\widetilde{\mu},h^{N_0}/C)$ for $C\gg 0$ and for $\varepsilon $ real such that
$|\varepsilon |\ll h^{N_0}$. These eigenvalues are equal to the
eigenvalues of the matrix
$$
M_\varepsilon =\begin{pmatrix}a(\varepsilon ) &b(\varepsilon )\\
\overline{b}(\varepsilon ) &\overline{a}(\varepsilon ),
\end{pmatrix}
$$
and hence of the form
$$
\lambda _\pm = \Re a\pm \sqrt{|b|^2-(\Im a)^2}.
$$
Here $a(\varepsilon )=a(\varepsilon ;h)$, $b(\varepsilon
)=b(\varepsilon ;h)$ satisfy,
$$
a(0;h)=\mu (h),\ \ b(0;h)=t(h),
$$
$$
\partial _\varepsilon a =i\int W(x)|e_1^0(x)|^2dx+\mathcal{ O}(\varepsilon
h^{-N_0})+\mathcal{ O}_\delta (e^{(\epsilon (\delta )-2S_0)/h}),
$$
$$
\partial _{\varepsilon }b=\mathcal{ O}_\delta e^{(\epsilon (\delta )-S_0)/h},
$$
for all $\delta >0$, where $\epsilon (\delta )\to 0$, $\delta \to
0$. Further, $e_1^0$ is the normalized eigenfunction with
$(\widetilde{P}_1-\widetilde{\mu }(h))e_1^0=0$.

\par If $W>0$ on $U_1$, then 
\begin{equation}\label{int.19}
\int W(x)|e_1^0(x)|^2 dx\asymp 1,
\end{equation}
and if we assume that (\ref{int.19}) holds, then there exists $\varepsilon _+\ge 0$ with
the asymptotics,
\begin{equation*}
  \varepsilon _+=(1+\widetilde{\CO}_\delta (e^{(\epsilon (\delta
    )-S_0)/h}))\frac{|t(h)|}{\int W(x)|e_1^0(x)|^2dx},\ \ \epsilon
  (\delta )\to 0,\ \delta \to 0,
\end{equation*}
such that 
\begin{itemize}
\item The two eigenvalues are real and distinct for $|\varepsilon
  |<\varepsilon _+$.
\item They are double and real when $|\varepsilon |=\varepsilon _+$.
\item They are non-real and complex conjugate, when $\varepsilon
  _+<|\varepsilon |\ll h^{N_0}$.
\end{itemize}
\end{theo}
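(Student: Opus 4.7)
The plan is to reduce the eigenvalue problem for $P_\varepsilon$ in $D(\widetilde{\mu}(h),h^{N_0}/C)$ to a $2\times 2$ matrix problem via a Grushin (Feshbach) reduction, in the spirit of the double-well tunneling analysis recalled after (\ref{int.15}). Take as injectors the normalized real eigenfunction $e_1^0$ of $\widetilde{P}_1$ and its $\CP$-image $e_{-1}^0:=\CP e_1^0$, which by (\ref{int.14}) is the normalized eigenfunction of $\widetilde{P}_{-1}$ associated with the same eigenvalue $\widetilde{\mu}(h)$. Let $R_- :\C^2 \to L^2(M)$ be $R_-(u_+,v_+)=u_+ e_1^0+v_+ e_{-1}^0$ and $R_+=R_-^*$, and form
\[
\CP_\varepsilon(z)=\begin{pmatrix} P_\varepsilon-z & R_-\\ R_+ & 0\end{pmatrix}.
\]
Using the spectral gap of size $h^{N_0}/C_0$ around $\widetilde{\mu}(h)$ for $\widetilde{P}_{\pm 1}$ together with the boundedness of $i\varepsilon W$ and $|\varepsilon|\ll h^{N_0}$, standard resolvent estimates (Proposition \ref{app.1} for the unperturbed operator and a Neumann series for the perturbation) show that $\CP_\varepsilon(z)$ is bijective on this disk, with a $2\times 2$ effective Hamiltonian $E_{-+}(z;\varepsilon)$ whose zeros, counted with multiplicity, are exactly the eigenvalues of $P_\varepsilon$ in the disk.

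The promised matrix $M_\varepsilon$ arises by solving $z-E_{-+}(z;\varepsilon)=0$ for $z$, equivalently by normalizing $E_{-+}$ so that the identity term is made explicit. Its structure is then forced by symmetry: since $e_1^0$ can be chosen real we have $\CT e_1^0=e_1^0$, and together with $\CP e_1^0=e_{-1}^0$ the operator $\CP\CT$ acts on $\C^2$ as the antilinear swap $(u_+,v_+)\mapsto(\overline{v_+},\overline{u_+})$. The identity (\ref{int.17}) then translates into $\CP\CT\,M_\varepsilon=M_\varepsilon\,\CP\CT$, which pins $M_\varepsilon$ down to the stated form and yields the eigenvalues $\Re a\pm\sqrt{|b|^2-(\Im a)^2}$. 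The initial values $a(0;h)=\mu(h)$, $b(0;h)=t(h)$ come from the self-adjoint analysis reviewed in Section \ref{pmr}.

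To compute $\partial_\varepsilon a$ and $\partial_\varepsilon b$, I would differentiate the identity $\CP_\varepsilon\CP_\varepsilon^{-1}=I$ and read off the first-order formula $\partial_\varepsilon E_{-+}=-R_+(iW)R_-+\ldots$ At $\varepsilon=0$ the diagonal entry gives $\langle iWe_1^0,e_1^0\rangle=i\int W|e_1^0|^2\,dx$, while the off-diagonal term $\langle iWe_1^0,e_{-1}^0\rangle$ is controlled by $\CO_\delta(e^{(\epsilon(\delta)-2S_0)/h})$ via Agmon exponential decay of $e_{\pm 1}^0$ away from $U_{\pm 1}$ weighted by $d(\cdot,U_{\pm 1})$. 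For $b$, the matrix element already couples states localized on opposite wells, so a single application of Agmon estimates gives the bound $\CO_\delta(e^{(\epsilon(\delta)-S_0)/h})$. The second-order contributions, which are quadratic in $iW$ convolved with the resolvent of $\CP_0(z)$, yield the remainder $\CO(\varepsilon h^{-N_0})$, uniformly controlled for $|\varepsilon|\ll h^{N_0}$ by iterating the Grushin expansion.

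Finally, the eigenvalues are real iff $|b(\varepsilon)|\ge|\Im a(\varepsilon)|$. Integrating the expansions from $\varepsilon=0$ and using (\ref{int.19}) when $W>0$ on $U_1$, one gets $\Im a(\varepsilon)=\varepsilon\int W|e_1^0|^2\,dx\,(1+o(1))$ and $|b(\varepsilon)|=|t(h)|(1+\widetilde{\CO}_\delta(e^{(\epsilon(\delta)-S_0)/h}))$, so the equation $|\Im a(\varepsilon)|=|b(\varepsilon)|$ has a unique positive solution $\varepsilon_+$ with the stated asymptotics, and the sign of $|b|^2-(\Im a)^2$ separates the three regimes. The main obstacle is quantitative: obtaining the exponentially sharp bound $\partial_\varepsilon b=\CO_\delta(e^{(\epsilon(\delta)-S_0)/h})$ uniformly in $\varepsilon$, which requires running Agmon weights through the perturbed Grushin resolvent and verifying that the non-self-adjoint perturbation $i\varepsilon W$ does not erode the exponential decay of the one-well quasimodes across the barrier separating $U_{-1}$ and $U_1$.
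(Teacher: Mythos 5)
Your Grushin reduction is a legitimate alternative route to the interaction matrix; the paper proceeds differently, working with the Riesz projections $\Pi_\varepsilon$, the basis $e_j^\varepsilon=\Pi_\varepsilon e_j$ of the two-dimensional spectral subspace $\CE_\varepsilon(h)$, and the dual basis $f_j^\varepsilon\in\CE_{-\varepsilon}(h)$ (using $P_\varepsilon^*=P_{-\varepsilon}$), so that $M_\varepsilon$ is by construction a genuinely $z$-independent matrix of ${P_\varepsilon}_{\vert \CE_\varepsilon(h)}$ and the $\CP\CT$ structure (\ref{pmr.30}) follows from ${\CP\CT}e_j^\varepsilon=e_{-j}^\varepsilon$, ${\CP\CT}f_j^\varepsilon=f_{-j}^\varepsilon$. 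As written, however, your outline has a genuine gap precisely at the quantitative heart of the theorem, the estimate $\partial_\varepsilon a= i\int W|e_1^0|^2dx+\CO(\varepsilon h^{-N_0})+\CO_\delta(e^{(\epsilon(\delta)-2S_0)/h})$. You justify the exponentially small error by the bound $\CO_\delta(e^{(\epsilon(\delta)-2S_0)/h})$ on the cross-well element $(iWe_1^0|e_{-1}^0)$, but that element is (i) the off-diagonal entry of $R_+(iW)R_-$, hence irrelevant to $a$, and (ii) only of size $\widetilde{\CO}(e^{-S_0/h})$, since Agmon decay gives $d(x,U_1)+d(x,U_{-1})\ge S_0$, not $2S_0$; indeed you yourself use the $S_0$ bound for the same type of element when discussing $\partial_\varepsilon b$, so your two claims are inconsistent. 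In the paper the $e^{-2S_0/h}$ precision on the diagonal comes from a cancellation: using $P_\varepsilon e_j^\varepsilon=\sum_\nu m_{\nu,j}^\varepsilon e_\nu^\varepsilon$, $P_\varepsilon^*f_j^\varepsilon=\sum_\nu \overline{m}_{j,\nu}^\varepsilon f_\nu^\varepsilon$ and the biorthogonality $\partial_\varepsilon(e_j^\varepsilon|f_j^\varepsilon)=0$, the remaining contributions are products of two cross-well factors, each $\widetilde{\CO}(e^{-S_0/h})$. Your sketch contains no analogue of this mechanism; without it (or without tracking that the corrections $E_\pm-R_\mp$ in the Grushin inverse are both exponentially small and localized in the opposite well, so that their pairing with $We_j^0$ gains a second factor $e^{-S_0/h}$), the natural error you obtain in $\partial_\varepsilon a$ is only $\widetilde{\CO}(e^{-S_0/h})$, which does not prove the stated formula.

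A second, smaller soft spot is the passage from the $z$-dependent effective Hamiltonian $E_{-+}(z;\varepsilon)$ to an honestly $z$-independent matrix $M_\varepsilon$ whose eigenvalues coincide exactly (with multiplicity) with the two eigenvalues of $P_\varepsilon$ in the disc and whose entries satisfy the symmetry and the derivative formulas: you only assert this (``solving $z-E_{-+}=0$'', ``normalizing''). Note also that $\CP\CT$ is antilinear, so the symmetry it induces on the Grushin data relates $E_{-+}(z)$ to $E_{-+}(\bar z)$, and pinning down the form (\ref{pmr.30}) entrywise requires an argument beyond the formal intertwining you state; the paper's dual-basis computation gives $m_{j,k}^\varepsilon=\overline{m_{-j,-k}^\varepsilon}$ directly. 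Finally, the uniform-in-$\varepsilon$ exponential decay for the non-self-adjoint operator, which you flag as the main obstacle, is in fact routine and is the content of Section \ref{la} of the paper (only $\Re V$ enters the energy identity), so that part is available; it is the two points above that need repair before your outline yields the theorem as stated.
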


\section{Lithner-Agmon estimates for non-self-adjoint Schr\"odinger operators}
\label{la}

We will need a few extensions of the
tunneling theory in the spirit of B.~Helffer and J.~Sj\"ostrand \cite{HeSj} to the
case of non-self-adjoint Schr\"odinger operators. We will follow the
presentation in Chapter 6 in \cite{DiSj99}. In the following $M$ will
denote either ${\R}^n$ or a compact Riemannian manifold. We start
by reviewing exponentially weighted Lithner-Agmon estimates. The
following is an immediate extension of Proposition 6.1 in
\cite{DiSj99}.

\begin{prop}\label{la.1}
  Let $\Omega \Subset M$ be open with smooth boundary and put
  $P=-h^2\Delta +V(x)$, for some fixed $V\in \CC(\overline{\Omega };{
    \C})$. Let $\Phi \in \CC^2(\overline{\Omega };{\R})$ Then for
  every $u\in \CC^2(\overline{\Omega })$ with ${{u}_\vert}_{\partial
    \Omega }=0$, we have
\begin{equation}\label{la.2}
\begin{split}
&h^2\int_\Omega | \nabla (e^{\Phi /h}u)|^2 dx+\int_{\Omega }(\Re
V(x)-|\nabla \Phi (x)|^2)e^{2\Phi (x)/h}|u(x)|^2 dx\\
&=\Re \int_\Omega
e^{2\Phi (x)/h}Pu(x)\overline{u}(x)dx.
\end{split}
\end{equation}
Here $|\cdot |$ denotes the standard norm on scalars or vectors. In
the Riemannian case the norm of the gradient is the natural one for
cotangent vectors. $\Delta $ denotes the Laplace-Beltrami operator and
$dx$ is the natural volume element.
\end{prop}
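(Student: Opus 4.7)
The plan is to reduce the identity to a direct integration by parts on the conjugated function $v := e^{\Phi/h} u$, exploiting the Dirichlet boundary condition $u|_{\partial\Omega}=0$ (equivalently $v|_{\partial\Omega}=0$) and taking real parts at the end to kill the purely imaginary cross terms and the imaginary part of $V$.

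First I would rewrite the right-hand side as $\Re \int_\Omega (-h^2\Delta u)\,\overline{u}\,e^{2\Phi/h}\,dx + \Re\int_\Omega V(x)\,|u|^2\,e^{2\Phi/h}\,dx$. The second integral, after taking the real part, immediately gives the contribution $\int_\Omega \Re V(x)\,e^{2\Phi/h}|u(x)|^2\,dx$, which is one of the terms on the left. So the work reduces to the kinetic term. Integration by parts (using $u=0$ on $\partial\Omega$, and on a Riemannian manifold the divergence theorem applied to $e^{2\Phi/h}\bar u\,\nabla u$) gives
\begin{equation*}
\int_\Omega (-h^2\Delta u)\,\overline{u}\,e^{2\Phi/h}\,dx = h^2\int_\Omega \nabla u\cdot \nabla\bigl(e^{2\Phi/h}\bar u\bigr)\,dx = h^2\int_\Omega e^{2\Phi/h}|\nabla u|^2\,dx + 2h\int_\Omega e^{2\Phi/h}\bar u\,\nabla\Phi\cdot\nabla u\,dx.
\end{equation*}

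Next I would substitute $u = e^{-\Phi/h}v$, so that $e^{\Phi/h}\nabla u = \nabla v - h^{-1}(\nabla\Phi)v$ and $e^{\Phi/h}\nabla\bar u = \nabla\bar v - h^{-1}(\nabla\Phi)\bar v$. Expanding the two integrals above in terms of $v$ yields
\begin{equation*}
h^2|\nabla v|^2 \;-\; |\nabla\Phi|^2|v|^2 \;+\; h\bigl[(\nabla\Phi\cdot\nabla v)\bar v \;-\; (\nabla\Phi\cdot\nabla\bar v)v\bigr],
\end{equation*}
pointwise, after the $2h$-term is combined with the cross terms coming from $|\nabla u|^2$. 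The bracketed expression is $2i\,\Im\bigl[(\nabla\Phi\cdot\nabla v)\bar v\bigr]$, which is purely imaginary since $\nabla\Phi$ is real-valued, and therefore disappears on taking real parts. What remains is exactly $h^2\int_\Omega |\nabla v|^2\,dx - \int_\Omega |\nabla\Phi|^2 |v|^2\,dx$, which combined with the $V$-contribution from the first step matches the left-hand side of (\ref{la.2}).

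There is no real obstacle here beyond careful bookkeeping; the only point that deserves attention is the cancellation of the cross terms, which relies crucially on $\Phi$ being real-valued (so that $\nabla\Phi\cdot\nabla v\,\bar v$ and its conjugate differ only by sign after pairing). In the Riemannian setting one simply replaces the Euclidean gradient and divergence by the Levi-Civita ones and $dx$ by the Riemannian volume element; Green's identity $\int_\Omega (-\Delta u)\,\varphi\,dV = \int_\Omega \langle\nabla u,\nabla\varphi\rangle\,dV$ for $u$ vanishing on $\partial\Omega$ is valid verbatim, so the computation carries over without change. The regularity assumption $u\in C^2(\overline\Omega)$ and $V$ merely continuous is enough to justify all manipulations, and the identity for less regular $u$ in the form domain follows by a standard density argument if needed.
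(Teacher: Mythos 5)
Your computation is correct: the Green's-formula integration by parts on $e^{2\Phi/h}\bar u\,\nabla u$, the substitution $v=e^{\Phi/h}u$, and the observation that the cross term $2hi\,\Im[(\nabla\Phi\cdot\nabla v)\bar v]$ is purely imaginary (because $\Phi$ is real) and dies on taking real parts, is exactly the standard argument. The paper gives no proof here, simply citing Proposition 6.1 of \cite{DiSj99} as an ``immediate extension,'' and that reference proves the identity by the same bookkeeping you carried out, so your proposal matches the intended proof.
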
 
Proposition 6.2 in \cite{DiSj99} extends to:

\begin{prop}\sl\label{la.3}
Under the assumptions of Proposition \ref{la.1}, let $0\le F_{\pm}\in L^\infty (\Omega )$
and $\Phi \in \CC^2(\overline{\Omega };{\R})$
satisfy
$$
\Re V -(\nabla \Phi (x))^2=F_+(x)^2-F_-(x)^2 \hbox{ almost everywhere.}
$$
Then \[
h^2\| \nabla (e^{\Phi /h}u)\|^2+\frac{1}{2}\| F_+e^{\Phi /h}u\|^2\le
\|\frac{1}{F_++F_-}e^{\Phi /h}Pu\|^2+\frac{3}{2}\|F_-e^{\Phi /h}u\|^2 .
\]
\end{prop}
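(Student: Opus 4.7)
The plan is to start from the pointwise identity in Proposition~\ref{la.1} and convert it into the announced inequality by means of a single weighted Cauchy--Schwarz estimate on the right-hand side.

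First I would substitute the decomposition $\Re V-|\nabla\Phi|^2=F_+^2-F_-^2$ into the identity to rewrite it as
\begin{equation*}
h^2\|\nabla(e^{\Phi/h}u)\|^2+\|F_+e^{\Phi/h}u\|^2-\|F_-e^{\Phi/h}u\|^2=\Re\int_\Omega e^{2\Phi/h}Pu\,\overline{u}\,dx.
\end{equation*}
The left-hand side now already contains the terms $h^2\|\nabla(e^{\Phi/h}u)\|^2$ and $\|F_+e^{\Phi/h}u\|^2$ that we want to keep, and a negative term $-\|F_-e^{\Phi/h}u\|^2$ which can simply be dropped; all the work therefore consists in bounding the right-hand side in terms of $\|(F_++F_-)^{-1}e^{\Phi/h}Pu\|$ and of $\|F_\pm e^{\Phi/h}u\|$.

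To this end I would factor the integrand as
\begin{equation*}
e^{2\Phi/h}Pu\,\overline{u}=\Bigl(\tfrac{1}{F_++F_-}e^{\Phi/h}Pu\Bigr)\cdot\bigl((F_++F_-)\overline{e^{\Phi/h}u}\bigr),
\end{equation*}
and apply the elementary inequality $|AB|\le\tfrac{1}{2\epsilon}|A|^2+\tfrac{\epsilon}{2}|B|^2$ with the specific choice $\epsilon=1/2$; this gives
\begin{equation*}
\Bigl|\Re\int_\Omega e^{2\Phi/h}Pu\,\overline{u}\,dx\Bigr|\le\bigl\|\tfrac{1}{F_++F_-}e^{\Phi/h}Pu\bigr\|^2+\tfrac{1}{4}\bigl\|(F_++F_-)e^{\Phi/h}u\bigr\|^2.
\end{equation*}
The pointwise bound $(F_++F_-)^2\le 2(F_+^2+F_-^2)$ then turns the second term into $\tfrac12\|F_+e^{\Phi/h}u\|^2+\tfrac12\|F_-e^{\Phi/h}u\|^2$. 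Plugging this back into the identity and transferring $\tfrac12\|F_+e^{\Phi/h}u\|^2$ to the left-hand side produces exactly the stated estimate, the coefficient $3/2$ in front of $\|F_-e^{\Phi/h}u\|^2$ being the sum $1+\tfrac12$ of the negative term from the identity and the $\tfrac12$ coming out of the Cauchy--Schwarz bound.

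There is no real obstacle; the proof is essentially a non-self-adjoint rewriting of Proposition~6.2 in \cite{DiSj99}, where the only new point is that $\Re V-|\nabla\Phi|^2$ is not required to have a sign and must be split as a difference of two nonnegative squares. The minor subtlety is that $F_++F_-$ may vanish: in that case the factor $(F_++F_-)^{-1}e^{\Phi/h}Pu$ must be interpreted as $+\infty$ on $\{F_++F_-=0\}\cap\{Pu\ne 0\}$, so that the inequality is trivially satisfied, while on $\{F_++F_-=0\}\cap\{Pu=0\}$ the factorization above is read as $0\cdot 0=0$ and the argument goes through unchanged. The choice $\epsilon=1/2$ in the Cauchy--Schwarz step is the only quantitatively delicate ingredient, since it is what produces the precise constants $1/2$ and $3/2$ that appear in the conclusion.
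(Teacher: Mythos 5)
Your proof is correct, and it is exactly the standard argument that the paper implicitly relies on by invoking Proposition~6.2 of \cite{DiSj99}: insert the splitting $\Re V-|\nabla\Phi|^2=F_+^2-F_-^2$ into the identity of Proposition~\ref{la.1}, estimate the right-hand side by the weighted Young inequality $|AB|\le |A|^2+\tfrac14|B|^2$ applied to $A=(F_++F_-)^{-1}e^{\Phi/h}Pu$, $B=(F_++F_-)e^{\Phi/h}u$, and use $(F_++F_-)^2\le 2(F_+^2+F_-^2)$, which yields precisely the constants $\tfrac12$ and $\tfrac32$. Your handling of the degenerate set $\{F_++F_-=0\}$ (where the right-hand side is $+\infty$ unless $Pu=0$ a.e.\ there, in which case the convention $0/0=0$ makes the pointwise bound valid) is also the right reading of the statement.
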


\par The propositions \ref{la.1}, \ref{la.3} allow us to make an
immediate extension of the discussion of the Lithner-Agmon (that we
abbreviate with LA) metric
(originally introduced in \cite{Li} and \cite{Ag}) and Proposition 6.4
in \cite{DiSj99}. We just have to replace the real potential there by
the real part $V_0$ of the potential $V_\varepsilon $ and recall that we
work near the real energy level $0$. We repeat the discussion for
completeness.

The LA metric is defined to be 
\begin{equation}\label{la.4}
V_0(x)_+dx^2.
\end{equation}
For a $\CC^1$ curve $\gamma $ we let $|\gamma |$ denote its
length in the LA-metric. If $x,y\in M$ we define the LA distance
$d(x,y)$ between $x$ and $y$ to be the infimum of the lengths $|\gamma
|$ for all $\CC^1$ curves from  $y$ to $x$. This distance may be
degenerate in the sense that we may have $d(x,y)=0$ for distinct
points $x$ and $y$. Nevertheless:
\begin{equation}\label{la.5}
d(x,y)=d(y,x),\ d(x,z)\le d(x,y)+d(y,z),
\end{equation}
\begin{equation}\label{la.6}
|d(x,z)-d(x,y)|\le d(y,z).
\end{equation}
Further, $y\mapsto d(x,y)$ is a locally Lipschitz function and 
\begin{equation}\label{la.7}
|d(x,z)-d(x,y)|\le (V_0(y)_++o(1))^{\frac{1}{2}}|z-y|_y,
\end{equation}
when $z\to y$, where $|.|_y$ is the Riemannian norm on $T_yM$ and we
identify $\mathrm{neigh\,}(0,T_yM)$ with $\mathrm{neigh\,}(y,M)$ by
means of the exponential map. It follows that for all $x,y\in M$,
\begin{equation}\label{la.8}
|\nabla _yd(x,y)|\le V_0(y)_+^{\frac{1}{2}},
\end{equation} 
\begin{equation}\label{la.9}
|\nabla _xd(x,y)|\le V_0(x)_+^{\frac{1}{2}}.
\end{equation} 
If $U\subset M$, we put
$d(x,U)=\inf _{y\in U}d(x,y)$. 
Then $|d(x,U)-d(y,U)|\le d(x,y)$, so $|\nabla _xd(x,U)|\le
V_0(x)_+^{\frac{1}{2}}$ a.e. on $M$.

Proposition 6.4 in \cite{DiSj99} remains valid, but we prefer to give
the following variant whose proof is basically the same:
\begin{prop}\label{la.10}
  Let $\CE\subset {\R}$, $K\subset M$ be compact sets,
  $0<h_0<<1$ and assume that
$$
(P_\varepsilon -z)u=v,\quad z=z(h)\to 0 \mbox{ as } h\to 0
$$
where $\varepsilon =\varepsilon (h)\in \CE$, $u=u(h)\in \CD(P_0)$,
$v=v(h)\in L^2$, $\mathrm{supp\,}v\subset K$. Then for every fixed
$\delta >0$ there exists a constant $C_\delta $ (independent of $u$, $v$) such that
$$
\|\nabla (e^{(1-\delta )\widetilde{\Phi }/h}u)\| +\|e^{(1-\delta
  )\widetilde{\Phi }/h}(1+V_{0\,+}^{\frac{1}{2}})u\|\le C_\delta
e^{\delta /h}\|u\|_{H^1(K_\delta )},
$$
where 
$$
K_\delta =\{x\in M;\, d_M(x,K)<\delta  \},\ \widetilde{\Phi }(x)=d(U,x).
$$
Here $d_M$ denotes the Riemannian distance.
\end{prop}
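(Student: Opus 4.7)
The strategy is to reduce the statement to a direct application of Proposition~\ref{la.3} by inserting a cutoff that kills the inhomogeneity $v$. First I would fix $\delta'\in(0,\delta)$ and choose $\chi\in\CC_0^\infty(M;[0,1])$ with $\chi=1$ on a neighborhood of $K\cup U$ and $\mathrm{supp\,}\chi\subset K_{\delta'}\cup U^{\delta'}$. Since $\mathrm{supp\,}v\subset K$ and $\chi=1$ there, one has $(1-\chi)v=0$, so
$$(P_\varepsilon-z)((1-\chi)u)=[-h^2\Delta,1-\chi]u=h^2[\Delta,\chi]u,$$
which is a first-order differential operator applied to $u$, supported in the thin shell $\mathrm{supp\,}\nabla\chi$. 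Its $L^2$-norm is therefore bounded by $Ch\|u\|_{H^1(K_\delta\cup U^\delta)}$.

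Next I would set up the weight $\Phi=(1-\delta)\widetilde{\Phi}$, which by (\ref{la.8}) satisfies $|\nabla\Phi|^2\le(1-\delta)^2V_{0\,+}$ almost everywhere. Consequently
$$V_0-\Re z-|\nabla\Phi|^2\ge (1-(1-\delta)^2)V_{0\,+}-V_{0\,-}-|\Re z|,$$
which I would decompose as $F_+^2-F_-^2$ with $F_-\ge 0$ supported in a small neighborhood of the wells $\{V_0\le 0\}\subset U$, and $F_+^2\gtrsim\delta(1+V_{0\,+})$ on the complement. The uniform positive lower bound of $V_0$ at any fixed positive distance from the wells (a consequence of compactness and, when $M=\R^n$, of (\ref{int.3})--(\ref{int.4})) is what produces the constant $1$ in $F_+$; the term $|\Re z|$ is absorbed since $z(h)\to 0$. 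Note that only $\Re V_\varepsilon=V_0$ enters the hypothesis of Proposition~\ref{la.3}, so the perturbation $i\varepsilon W$ plays no role at this stage.

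Applying Proposition~\ref{la.3} to $(1-\chi)u$ with $V=V_\varepsilon-z$ and this weight yields
$$h^2\|\nabla(e^{\Phi/h}(1-\chi)u)\|^2+\tfrac12\|F_+e^{\Phi/h}(1-\chi)u\|^2\le \|(F_++F_-)^{-1}e^{\Phi/h}h^2[\Delta,\chi]u\|^2+\tfrac32\|F_-e^{\Phi/h}(1-\chi)u\|^2.$$
The second right-hand term vanishes because $F_-$ is supported where $\chi=1$, so $F_-(1-\chi)=0$. On $\mathrm{supp\,}\nabla\chi$ one has $\widetilde{\Phi}\le C\delta'$, hence $e^{\Phi/h}\le e^{C\delta'/h}$ there, and the first right-hand term is bounded by $C_{\delta'}e^{C\delta'/h}\|u\|_{H^1(K_\delta\cup U^\delta)}^2$. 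Combining with the trivial bound on the cutoff region $\{\chi\neq 0\}\subset K_\delta\cup U^\delta$ (where $e^{(1-\delta)\widetilde{\Phi}/h}\le e^{C\delta/h}$ and $V_{0\,+}$ is bounded) and choosing $\delta'$ as a small fraction of $\delta$ gives the announced inequality.

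The main obstacle, in my view, is constructing the decomposition $F_+^2-F_-^2$ so that $F_+\gtrsim 1+V_{0\,+}^{1/2}$ on the support of $1-\chi$: the gain $\delta V_{0\,+}$ furnished by the weighted estimate only controls the $V_{0\,+}^{1/2}$ part, and one must separately exploit the strict positivity of $V_0$ outside a small neighborhood of the wells to produce the additive constant~$1$, with a smooth interpolation between the two regimes. A minor additional technicality, absent from the compact model in \cite{DiSj99}, is the control of the unbounded region when $M=\R^n$, which is handled by approximating $u\in\CD(P_0)$ by compactly supported functions before passing to the limit in (\ref{la.2}).
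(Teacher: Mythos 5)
Your overall strategy --- kill $v$ with a cutoff equal to $1$ near $K\cup U$, then apply Proposition \ref{la.3} to $(1-\chi)u$ with the weight $(1-\delta)\widetilde{\Phi}$ and a splitting $F_{\pm}$ in which $F_-$ lives in a small neighborhood of the wells --- is exactly the argument the paper has in mind (it simply refers to the proof of Proposition 6.4 in \cite{DiSj99}, which runs along these lines via Propositions \ref{la.1} and \ref{la.3}). However, as written your proof does not yield the stated inequality, for two related reasons. First, your claim that $\widetilde{\Phi}\le C\delta'$ on $\mathrm{supp}\,\nabla\chi$ is justified only on the part of that set contained in $U^{\delta'}$; on the part contained in $K_{\delta'}$ one has $\widetilde{\Phi}=d(U,\cdot)$, which for a general compact $K$ is not small, so the commutator term really carries a factor $e^{(1-\delta)\sup_{K_{\delta'}}\widetilde{\Phi}/h}$ that you have silently dropped. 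Second, the right-hand side you actually obtain is $\|u\|_{H^1(K_\delta\cup U^\delta)}$, not $\|u\|_{H^1(K_\delta)}$: the region near the wells enters unavoidably (through your cutoff near $U$, or, if one does not cut off there, through the term $\tfrac32\|F_-e^{\Phi/h}u\|^2$ in Proposition \ref{la.3}), and it cannot be discarded.

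That said, the defect lies as much in the literal statement as in your execution: for an arbitrary compact $K$ the stated inequality is false (take $v=0$, $u$ a normalized eigenfunction with eigenvalue $z(h)\to 0$, and $K$ a point at positive LA-distance from $U$; the left-hand side is $\gtrsim 1$ while $e^{\delta/h}\|u\|_{H^1(K_\delta)}$ is exponentially small by Agmon decay). The proposition is meant to be used, as in Section \ref{pmr}, with $K$ a neighborhood of the wells, so that $\widetilde{\Phi}=\mathcal{O}(\delta)$ on $K_\delta$ and a neighborhood of $U$ is contained in $K_\delta$; under that reading both discrepancies disappear and your argument is correct and coincides with the paper's. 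To make it airtight you should either state this hypothesis explicitly or keep the factor $e^{(1-\delta)\sup_{K_\delta}\widetilde{\Phi}/h}$ and the norm over $K_\delta\cup U^\delta$ in your conclusion. Two further routine points deserve a sentence: $(1-\delta)\widetilde{\Phi}$ is only Lipschitz, so Proposition \ref{la.3} must be applied after regularizing the weight (or with the standard remark that it extends to Lipschitz $\Phi$); and when $M=\R^n$ the weight is unbounded, so one should first work with the truncated weights $\min\big((1-\delta)\widetilde{\Phi},R\big)$, combined with your approximation of $u\in\CD(P_0)$ (which is only $H^2_{\mathrm{loc}}$) by compactly supported functions, and let $R\to\infty$ at the end.
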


\par We end this section by recalling some terminology from
\cite{DiSj99} (earlier used in the works of Helffer and Sj\"ostrand,
cf. \cite{HeSj}). Let $A=A_h$ be a family of operators $L^2(M)\to H^1(M)$ depending on $h\in
]0,h_0[$ where $h_0>0$ is small. Let $f\in C^0(M\times M;{\R})$. We
say that the kernel $A(x,y)$ of $A$ (using the same notation for an
operator and its distribution kernel) is $\widehat{{\mathcal O}}(e^{-f(x,y)/h})$ if for all $x_0,y_0\in M$ and $\delta >0$,
there exist neighborhoods $V,U\subset M$ of $x_0$ and $y_0$ and a
constant $C>0$, such that 
$$
\|Au\|_{H^1(V)}\le Ce^{-(f(x_0,y_0)-\delta )/h}\|u\|_{L^2(U)}
$$
for all $u\in L^2(M)$ with support in $U$. We have the analogous
definitions for operators $L^2(M)\to L^2(M)$ and the choice of arrival
space will be clear from the context. If not, we write $\widehat{{\mathcal O}}_{L^2\to H^1}(e^{-f/h})$ and $\widehat{{\mathcal O}}_{L^2\to L^2}(e^{-f/h})$, to specify. 

We make two observations in the case when $M$ is compact
\begin{itemize}
\item[1)] If $A(x,y)=\widehat{\CO}_{L^2\to X}(e^{-f/h})$,
  $B(x,y)=\widehat{\CO}_{L^2\to L^2}(e^{-g/h})$, where $X$ is
  equal to $L^2$ or $H^1$, then $A\circ B(x,y)=\widehat{{\mathcal O}}_{L^2\to X}(e^{-k/h})$, where $k(x,y)=\min_{x\in
    M}(f(x,z)+g(z,y))$.
\item[2)] There is an obviously analogous notion $u=\widehat{{\mathcal O}}_X (e^{\phi (x)/h})$ when $\phi \in \CC(M;{\R})$, $u\in X$,
  $X=L^2$ or $X=H^1$. Let $A(x,y)=\widehat{\CO}_{L^2\to
    X}(e^{-f(x,y)/h})$, $u=\widehat{\CO}_{L^2}(e^{\phi /h})$
  where $\phi \in \CC(M;{\R})$.  Then, $Au=\widehat{{\mathcal O}}_X(e^{\psi/h} )$, where $\psi (x)=\sup_{y\in M}(-k(x,y)+\phi
  (y))$.
\end{itemize}
When $M={\R}^n$, one can adapt these notions provided that we have
some uniform exponential decay near infinity. Below, we will always be
in such situations, so we shall proceed as in the compact case.

\section{Proof of the main result}\label{pmr}
\setcounter{equation}{0}

 Let $\widetilde{e}_j=\widetilde{e}_j(h)$ be normalized
eigenfunctions of $\widetilde{P}_j$ corresponding to the eigenvalue
$\mu (h)$:
\begin{equation}\label{pmr.1}
(\widetilde{P}_j-\widetilde{\mu })\widetilde{e}_j=0.
\end{equation}
We choose $\widetilde{e}_j$ so that
\begin{equation}\label{pmr.2}
{\CP}\widetilde{e}_j=\widetilde{e}_{-j}.
\end{equation}

\par We know that
\begin{equation}\label{pmr.3}
\widetilde{e}_j=\widehat{\CO}_{H^1}(e^{-d(U_j,x)/h}),
\end{equation}
and we have nice uniform exponential decay estimates near infinity
when $M={\R}^n$ (cf. Proposition \ref{la.3}). In particular,
\begin{equation}\label{pmr.4}
(\widetilde{e}_1|\widetilde{e}_{-1})=\widehat{\CO}(e^{-S_0/h}),
\end{equation}
where we extended the notion $\widehat{\CO}$ to scalar quantities
in the natural way. 

\par We know that for $h$ small enough, the spectrum of $P_0$ in
\begin{equation}\label{pmr.5}
]\widetilde{\mu }-\frac{h^{N_0}}{2C_0},\widetilde{\mu }+\frac{h^{N_0}}{2C_0}[\end{equation} consists of two simple or one double double
eigenvalue. Let $\CE_0(h)\subset L^2(M)$ be the corresponding
2-dimensional spectral subspace and let $\Pi _0(h):L^2(M)\to L^2(M)$
be the associated spectral projection. Since $P_0$ is self-adjoint, we
know that $\Pi _0$ is orthogonal, $\Pi _0=\Pi _0^*$.

\par The functions $\Pi _0\widetilde{e}_j$, $j=\pm 1$ form a basis in
$\CE_0(h)$ and we have
\begin{equation}\label{pmr.6}
\Pi _0\widetilde{e}_j(x)-\widetilde{e}_j(x)=\widehat{{\mathcal O}}(e^{-\frac{1}{h}(d(U_{-j}^\delta ,x)+S_0-2\delta )}).
\end{equation}
From (\ref{pmr.4}) we see that $\Pi _0\widetilde{e}_j$ form an almost
orthonormal basis in $\CE_0(h)$ (see \cite{DiSj99} for more
details) and this basis can be orthonomalized by using the square root
of the Gram matrix (which is very close to the idenity) in order to
produce an orthonormal basis $e_1,e_{-1}$ such that
\begin{equation}\label{pmr.7}
e_j-\widetilde{e}_j=\widetilde{\CO}(e^{-\frac{1}{h}(S_0+d(U_{-j},x))})
\end{equation}
where we use the notation $\widetilde{\CO}(e^{f/h})$ for ${\mathcal O}(e^{(f-\varepsilon (\delta ))/h})$ (or $\widehat{\CO}
(e^{(f-\varepsilon (\delta ))/h})$ depending on the context) for every
fixed $\delta >0$, where $\varepsilon (\delta )\to 0$ when $\delta \to
0$. The matrix of ${{P_0}_\vert}_{\CE_0(h)}$ with respect to this
basis is
\begin{equation}\label{pmr.8}
\begin{pmatrix}\mu (h) &t(h)\\ \overline{t(h)} &\mu (h)\end{pmatrix},
\end{equation}
where 
\begin{equation}\label{pmr.9}
\mu (h)=\widetilde{\mu }(h)+\widetilde\CO(e^{-2S_0/h})
\end{equation}
is real and the tunneling coefficient fulfills
\begin{equation}\label{pmr.10}
t(h)=\widehat{\CO}(e^{-S_0/h}).
\end{equation}
See Theorem 6.10 in \cite{DiSj99}. 

\par
In many situation we have a matching lower bound on $|t(h)|$:
\begin{equation}\label{pmr.11}
1/|t(h)|=\widehat{\CO}(e^{S_0/h}).
\end{equation}
The two eigenvalues of $P_{0}(h)$ in the interval (\ref{pmr.5}) are the
ones of the matrix (\ref{pmr.8}):
\begin{equation}\label{pmr.12}
\mu _{\pm 1}(h)=\mu (h)\pm |t(h)|.
\end{equation}

We now turn to the perturbed operator $P_\varepsilon$,
 where $W\in \CC^\infty (M;{\R})\cap L^\infty (M)$ and we
assume for simplicity, that $\|W\|_{L^\infty }\le 1$. As for $\varepsilon
$, we require that
\begin{equation}\label{pmr.13}
|\varepsilon |\ll h^{N_0}.
\end{equation}
We know that the spectrum of $P_\varepsilon $ is discrete in some fixed
($h$-independent) neighborhood of $0$ when $h$ and $|\varepsilon
|$ are small enough. From the assumption (\ref{pmr.13}),
it follows that $P_\varepsilon $ has precisely two eigenvalues, counted
with their (algebraic) multiplicity, in the disc $D(\widetilde{\mu
},h^{N_0}/(2C))$ and these eigenvalues belong to the smaller disc 
$D(\mu (h),|t(h)|+\varepsilon )$. Let $\CE_\varepsilon (h)$ be the
corresponding 2-dimensional spectral subspace and let $\Pi _\varepsilon
(h):L^2(M)\to \CE_\varepsilon (h)$ be the spectral projection, where
we recall the Riesz formula
\begin{equation}\label{pmr.14}
\Pi _\varepsilon =\frac{1}{2\pi i}\int_\gamma (z-P_\varepsilon )^{-1}dz,\
\gamma =\partial D(\widetilde{\mu },\frac{h^{N_0}}{2C}).
\end{equation}
Here $D(z_0,r)$ denotes the open disc in ${\C}$ of center $z_0$
and radius $r$.
Using the Riesz formula (cf. \cite[p.62]{DiSj99}) we obtain
\begin{equation}\label{pmr.15}
\|\Pi _\varepsilon -\Pi _0\|=\CO(\varepsilon h^{-N_0})\ll 1.
\end{equation}
Thus, introducing 
\begin{equation}\label{pmr.16} e_j^\varepsilon=\Pi _\varepsilon e_j,\end{equation}
we see that $e_1^\varepsilon $, $e_{-1}^\varepsilon $ form a basis for
$\CE_\varepsilon (h)$ which is close to be orthonormal.
Differentiating in (\ref{pmr.14}), we see that
\begin{equation}\label{pmr.17}
\partial _\varepsilon \Pi _\varepsilon =\CO(h^{-N}),
\end{equation}
which also implies (\ref{pmr.15}).

\par As we have seen in Section \ref{la}, LA estimates work
also for $P_\varepsilon $ and we have
\begin{equation}\label{pmr.18}
  e_j^\varepsilon ,\, \partial _\varepsilon e_j^\varepsilon =\widetilde{\CO}(e^{-d(U_j,x)/h}). 
\end{equation} 
In fact, we know as in the self-adjoint case (\cite{DiSj99}) that $\Pi
_\varepsilon ,\, \partial _\varepsilon \Pi _\varepsilon =\widehat{{\mathcal O}}(e^{-d(x,y)/h})$ and $e_j=\CO(e^{-d(U_j,x)/h})$. The functions
$e_j^\varepsilon $, $j=\pm 1$, form an orthonormal basis for ${\CE}_\varepsilon (h)$ when $\varepsilon =0$ but not necessarily when
$\varepsilon \ne 0$. Recalling that $P_\varepsilon ^*=P_{-\varepsilon }$, we
let $f_1^\varepsilon ,f_{-1}^\varepsilon \in \CE_{-\varepsilon } (h)$ be the
dual basis to $e_1^\varepsilon ,\, \varepsilon _{-1}^\varepsilon \in {\mathcal
  E}_\varepsilon (h)$:
\begin{equation}\label{pmr.19}
(f_j^\varepsilon |e_k^\varepsilon )=\delta _{j,k},\ j,k\in \{-1,1 \}.
\end{equation}
\begin{prop}\label{pmr.20}
We have
\begin{equation}\label{pmr.21}
f_k^\varepsilon ,\, \partial _\varepsilon f_k^\varepsilon =\widetilde{{\mathcal O}}(e^{-d(U_k,x)/h}),\ k=\pm 1.
\end{equation}
\end{prop}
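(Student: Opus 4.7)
The plan is to represent $f_k^\varepsilon$ in the (non-orthogonal) basis of $\CE_{-\varepsilon}(h)$ given by $\widetilde f_j^\varepsilon:=\Pi_{-\varepsilon}e_j$, $j=\pm 1$, and to recover the coefficients from the biorthogonality~(\ref{pmr.19}). The decay of each $\widetilde f_j^\varepsilon$ comes for free: the same resolvent/LA analysis that yields $\Pi_\varepsilon,\partial_\varepsilon\Pi_\varepsilon=\widehat\CO(e^{-d(x,y)/h})$ applies verbatim to $P_{-\varepsilon}$, and then the composition observations at the end of Section~\ref{la}, together with $e_j=\widehat\CO(e^{-d(U_j,x)/h})$ and the triangle inequality $\inf_y[d(x,y)+d(U_j,y)]=d(U_j,x)$, give
$$
\widetilde f_j^\varepsilon,\ \partial_\varepsilon\widetilde f_j^\varepsilon=\widetilde\CO\!\bigl(e^{-d(U_j,x)/h}\bigr).
$$

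Next, writing $f_k^\varepsilon=\sum_j\gamma_{jk}(\varepsilon)\widetilde f_j^\varepsilon$ and using $\Pi_\varepsilon^*=\Pi_{-\varepsilon}$ (from $P_\varepsilon^*=P_{-\varepsilon}$ and the Riesz formula on a contour symmetric under complex conjugation), the Gram-type matrix is
$$
M_{lj}(\varepsilon):=(\widetilde f_l^\varepsilon|e_j^\varepsilon)=(e_l|\Pi_\varepsilon e_j)=(e_l|e_j^\varepsilon),
$$
and (\ref{pmr.19}) forces $\gamma(\varepsilon)=(M(\varepsilon)^*)^{-1}$. Orthonormality of $\{e_j\}$ in $\CE_0(h)$ gives $M(0)=I$, and differentiation produces
$$
\partial_\varepsilon M_{lj}(\varepsilon)=(e_l\,|\,\partial_\varepsilon\Pi_\varepsilon\,e_j)=\widetilde\CO\!\bigl(e^{-d(U_l,U_j)/h}\bigr),
$$
the last equality from pairing the pointwise bounds $e_l=\widehat\CO(e^{-d(U_l,x)/h})$ and $\partial_\varepsilon\Pi_\varepsilon e_j=\widetilde\CO(e^{-d(U_j,x)/h})$ and using $d(U_l,x)+d(U_j,x)\ge d(U_l,U_j)$. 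Thus the off-diagonal entries of $\partial_\varepsilon M$ are $\widetilde\CO(e^{-S_0/h})$ and the diagonal ones merely $\CO(1)$. Integrating in $\varepsilon$ and expanding $(M^*)^{-1}$ as a Neumann series, both $\gamma(\varepsilon)-I$ and $\partial_\varepsilon\gamma(\varepsilon)$ inherit the same structure: diagonal entries of size $\CO(\varepsilon)$ and $\CO(1)$ respectively, and off-diagonal entries of size $\widetilde\CO(\varepsilon\,e^{-S_0/h})$ and $\widetilde\CO(e^{-S_0/h})$ respectively.

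The estimate (\ref{pmr.21}) then follows by expanding $f_k^\varepsilon=\gamma_{kk}\widetilde f_k^\varepsilon+\gamma_{-k,k}\widetilde f_{-k}^\varepsilon$: the diagonal term is $\widetilde\CO(e^{-d(U_k,x)/h})$ outright, while the off-diagonal term is
$$
\widetilde\CO\!\bigl(\varepsilon\,e^{-S_0/h}\bigr)\cdot\widetilde\CO\!\bigl(e^{-d(U_{-k},x)/h}\bigr)=\widetilde\CO\!\bigl(e^{-(S_0+d(U_{-k},x))/h}\bigr),
$$
which is $\widetilde\CO(e^{-d(U_k,x)/h})$ by the triangle inequality $d(U_k,x)\le S_0+d(U_{-k},x)$. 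The bound on $\partial_\varepsilon f_k^\varepsilon$ is obtained identically after differentiation. The principal difficulty in this scheme is extracting the factor $e^{-S_0/h}$ in the off-diagonal of $M(\varepsilon)$: the operator-norm bound (\ref{pmr.15}) is only polynomial in $h$, and the triangle inequality can be saturated, $d(U_k,x)=S_0+d(U_{-k},x)$, near $U_{-k}$, so without true exponential smallness of $\gamma_{-k,k}$ one could not cancel the ``wrong'' decay of $\widetilde f_{-k}^\varepsilon$ in the opposite well. This gain comes precisely from the full pointwise kernel decay of $\partial_\varepsilon\Pi_\varepsilon$ paired against two functions localized in disjoint wells.
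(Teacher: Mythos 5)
Your argument is correct and is essentially the paper's own proof: your $\widetilde f_j^\varepsilon=\Pi_{-\varepsilon}e_j$ are exactly the functions $e_j^{-\varepsilon}$ used there, your Gram-type matrix $M$ is the paper's $b_{j,k}=(e_j^{-\varepsilon}|e_k^\varepsilon)$ (the paper bounds its off-diagonal and $\partial_\varepsilon$ directly from the pointwise decay (\ref{pmr.18}) rather than by your differentiate-and-integrate route, but both rest on the same kernel estimates for $\Pi_\varepsilon,\partial_\varepsilon\Pi_\varepsilon$), and the conclusion via $d(U_k,x)\le S_0+d(U_{-k},x)$ is identical. Two harmless bookkeeping points: with the paper's sesquilinear convention the coefficient matrix is $(M^{T})^{-1}$ rather than $(M^{*})^{-1}$ (irrelevant for size estimates), and the diagonal entries of $\partial_\varepsilon M$ are only $\CO(h^{-N_0})$, not $\CO(1)$, which is what the paper uses and suffices since $|\varepsilon|\ll h^{N_0}$ and $\widetilde{\CO}$ absorbs polynomial factors.
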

\begin{proof}
Let $b_{j,k}=(e_j^{-\varepsilon }|e_k^\varepsilon )$, so that in the space
of $2\times 2$-matrices,
\begin{equation}\label{pmr.22}
(b_{j,k})=1+\CO(\varepsilon h^{-N_0})
\end{equation}
by (\ref{pmr.15}). By (\ref{pmr.18}) we have
\begin{equation}\label{pmr.23}
b_{j,k},\, \partial _{\varepsilon }b_{j,k}=\widetilde{\CO } (e^{-S_0/h}),\hbox{
  when }j\ne k.
\end{equation}
Write
$$
f_j^\varepsilon =\sum_\nu c_{j,\nu }e_{\nu }^{-\varepsilon }.
$$
Then (\ref{pmr.19}) reads
$$
\sum_\nu  c_{j,\nu }(e_\nu ^{-\varepsilon }|e_k^\varepsilon )=\delta _{j,k},
$$
i.e. 
$$
\sum_\nu  c_{j,\nu }b_{\nu ,k}=\delta _{j,k},
$$
so
\begin{equation}\label{pmr.24}
(c_{j,k})=(b_{j,k})^{-1}=\begin{pmatrix}1/b_{1,1} &0\\ 0
  &1/b_{-1,-1}\end{pmatrix}+\widetilde{\CO}(e^{-S_0/h}),\
b_{j,j}=1+\CO(\varepsilon h^{-N_0}),
\end{equation}
where the last equality follows from (\ref{pmr.22}). We
therefore get the estimate for $f_k^\varepsilon $ in (\ref{pmr.21}).

\par In order to get the estimate for $\partial _\varepsilon f_k^\varepsilon
$ in (\ref{pmr.21}), we first observe that 
\begin{equation}\label{dw.38}
\partial _\varepsilon b_{j,j}=\CO(h^{-N_0}),\ \partial _\varepsilon
b_{j,k}=\widetilde{\CO}(e^{-S_0/h}),\hbox{ when }j\ne k.
\end{equation}
Combining this with the standard formula
$$
\partial _\varepsilon (c_{j,k})=-(c_{j,k})\circ \partial _{\varepsilon
}(b_{j,k})\circ (c_{j,k}),
$$
(\ref{pmr.22}) and (\ref{pmr.24}), we see that $c_{j,k}$ also satisfy
(\ref{dw.38}):
\begin{equation}\label{pmr.26}
\partial _\varepsilon c_{j,j}=\CO(h^{-N_0}),\ \partial _\varepsilon
c_{j,k}=\widetilde{\CO}(e^{-S_0/h}),\hbox{ when }j\ne
k. \end{equation}
Now, 
$$
\partial _\varepsilon f_k^\varepsilon =\sum_\nu (\partial _\varepsilon c_{k,\nu
})e_\nu ^{-\varepsilon }+\sum_\nu c_{k,\nu } (\partial _\varepsilon e_\nu
^{-\varepsilon })
$$
and the estimate for $\partial _\varepsilon f_k^\varepsilon $ in
(\ref{pmr.21}) follows from (\ref{pmr.22}), (\ref{pmr.24}),
(\ref{pmr.18}) with $\varepsilon $ replaced by $-\varepsilon $ in the last
relation. 
\end{proof}

\par Let $M_\varepsilon =(m_{j,k}^\varepsilon )$ denote the matrix of
$P_\varepsilon =\CE_\varepsilon (h)\to \CE_\varepsilon (h) $ with
respect to the basis $e_1^\varepsilon ,\, e_{-1}^\varepsilon $. Then
\begin{equation}\label{pmr.27}
m_{j,k}^\varepsilon = (P_\varepsilon e_k^\varepsilon |f_j^\varepsilon
)=(e_k^\varepsilon |P_{-\varepsilon }f_j^\varepsilon ).
\end{equation}
Note that $f_j^0=e_j^0$ since $e_1^0,\, e_{-1}^0$ is an orthonormal
basis, and that $M_0$ is the matrix in (\ref{pmr.8}).

\par Naturally, the ${\CP\CT}$-symmetry of $P_\varepsilon $ induces a
corresponding symmetry for $M_\varepsilon $ that we shall make
explicit. By construction, we have ${\CP\CT}e_j^\varepsilon
=e_{-j}^\varepsilon $. Also notice that 
$$
({\CP\CT}u|{\CP\CT}v)=\overline{(u|v)}=(v|u),\ u,v\in L^2(M).
$$
From  (\ref{pmr.19}), we get
$$
({\CP\CT}f_j^\varepsilon |{\CP\CT}e_k^\varepsilon )=\delta _{j,k},
$$
i.e.
$$
({\CP\CT}f_j^\varepsilon |e_{-k}^\varepsilon )=\delta _{j,k}=\delta _{-j,-k}.
$$
Comparing with (\ref{pmr.19}) (and recalling that $\CE_\varepsilon $
and $\CE_{-\varepsilon} $ are invariant under the action of ${\CP\CT}$) we conclude that 
\begin{equation}\label{pmr.28}
{\CP\CT}f_j^\varepsilon =f_{-j}^\varepsilon .
\end{equation}
We have,
\begin{equation}\label{pmr.29}
\begin{split}
m_{j,k}^\varepsilon &=(P_\varepsilon e_k^\varepsilon |f_j^\varepsilon )=(P_\varepsilon
{\CP\CT}e_{-k}^\varepsilon |{\CP\CT}f_{-j}^\varepsilon )\\
&=({\CP\CT}P_\varepsilon e_{-k}^\varepsilon |{\CP\CT}f_{-j}^\varepsilon )
=\overline{(P_\varepsilon e_{-k}^\varepsilon |f_{-j}^\varepsilon
  )}=\overline{m_{-j,-k}^\varepsilon },
\end{split}
\end{equation}
which means that the general form of $M_\varepsilon $ is
\begin{equation}\label{pmr.30}
M_\varepsilon =\begin{pmatrix}a(\varepsilon ) &b(\varepsilon )\\
  \overline{b}(\varepsilon ) &\overline{a}(\varepsilon )\end{pmatrix}.
\end{equation}
This can also be expressed as a ${\CP\CT}$-symmetry property of
$M_\varepsilon $ as a linear map: ${\C}^2\to {\C}^2$: Define $\pi
,\tau :{\C}^2\to {\C}^2$ by 
\begin{equation}\label{pmr.31}
\pi \begin{pmatrix}x_1\\x_2\end{pmatrix}=\begin{pmatrix}x_2\\x_1\end{pmatrix},\quad
\tau \begin{pmatrix}x_1\\
  x_2\end{pmatrix}=\begin{pmatrix}\overline{x}_1\\ \overline{x}_2\end{pmatrix}.
\end{equation}
Then (\ref{pmr.29}) is equivalent to the property,
\begin{equation}\label{pmr.32}
\pi \tau M_\varepsilon =M_\varepsilon \pi \tau .
\end{equation}
Since this formulation will not be needed below, we leave out the
simple and straight forward proof.

\par We now study $\partial _\varepsilon m_{j,k}^\varepsilon $. First, if
$j\ne k$, we have
\begin{equation}\label{pmr.33}
\partial _\varepsilon m_{j,k}^\varepsilon =i(We_k^\varepsilon |f_j^\varepsilon
)+(P_\varepsilon \partial _\varepsilon e_k^\varepsilon |f_j^\varepsilon
)+(P_\varepsilon e_k^\varepsilon |\partial _\varepsilon
f_j^\varepsilon )=\widetilde{\CO } (e^{-S_0/h}).
\end{equation}

\par For $j=k$, we start with
\begin{equation}\label{pmr.34}
\partial _\varepsilon m_{j,j}^\varepsilon =i(We_j^\varepsilon |f_j^\varepsilon
)+(P_\varepsilon \partial _\varepsilon e_j^\varepsilon |f_j^\varepsilon
)+(P_\varepsilon e_j^\varepsilon |\partial _\varepsilon f_j^\varepsilon ).
\end{equation}
Here we use that $e_j^\varepsilon =e_j^0+\CO(\varepsilon h^{-N_0})$,
$f_j^\varepsilon =f_j^0+\CO(\varepsilon h^{-N_0})$ in $L^2$, to see
that 
\begin{equation}\label{pmr.35}
(We_j^\varepsilon |f_j^\varepsilon )=(We_j^0|e_j^0)+\CO(\varepsilon
h^{-N_0})=\int W(x)|e_j^0(x)|^2 dx +\CO(\varepsilon h^{-N_0}).
\end{equation}
In order to treat the other two terms in (\ref{pmr.34}), we recall that
by definition of $m_{j,k}^\varepsilon $, we have
\begin{equation}\label{pmr.36}
P_\varepsilon e_j^\varepsilon =\sum_\nu m_{\nu ,j}^\varepsilon e_\nu ^\varepsilon .
\end{equation}
We need a similar formula for $P_\varepsilon ^*f_j^\varepsilon $, so we take
the $L^2$ inner product of (\ref{pmr.36}) with $f_k^\varepsilon $ and get
$$
(e_j^\varepsilon |P_\varepsilon ^*f_k^\varepsilon )=\sum_\nu m_{\nu ,j}^\varepsilon
\underbrace{(e_\nu ^\varepsilon |f_k^\varepsilon )}_{\delta
  _{\nu ,k}}=m_{k,j}^\varepsilon .
$$
Exchange $j,k$ and take the complex conjugates:
$$
(P_\varepsilon ^*f_j^\varepsilon |e_k^\varepsilon )=\overline{m}_{j,k}^\varepsilon ,
$$
to conclude that
\begin{equation}\label{pmr.37}
P_\varepsilon ^*f_j^\varepsilon =\sum_\nu \overline{m}_{j,\nu }^\varepsilon
f_\nu ^\varepsilon .
\end{equation}
Using (\ref{pmr.36}), (\ref{pmr.37}), we get
\[\begin{split}
(P_\varepsilon \partial _\varepsilon e_j^\varepsilon |f_j^\varepsilon
)+(P_\varepsilon e_j^\varepsilon |\partial _\varepsilon f_j^\varepsilon
)&=(\partial _\varepsilon e_j^\varepsilon |P_\varepsilon ^* f_j^\varepsilon
)+(P_\varepsilon e_j^\varepsilon |\partial _\varepsilon f_j^\varepsilon )\\
&=\sum_\nu \left( m_{j,\nu }^\varepsilon (\partial _\varepsilon e_j^\varepsilon
|f_\nu ^\varepsilon ) +m_{\nu ,j}^\varepsilon (e_\nu ^\varepsilon |\partial
_\varepsilon f_j^\varepsilon )\right)\\
&=m_{j,j}^\varepsilon \underbrace{\left( (\partial _\varepsilon e_j^\varepsilon
  |f_j^\varepsilon )+(e_j^\varepsilon |\partial _\varepsilon f_j^\varepsilon )
\right)}_{\partial _\varepsilon (e_j^\varepsilon |f_j^\varepsilon )=\partial
_\varepsilon (1)=0}
\\ &\hskip 2 truecm +\underbrace{m_{j,-j}^\varepsilon }_{\widetilde{\CO}(e^{-S_0/h})}
\underbrace{(\partial _\varepsilon e_j^\varepsilon |f_{-j}^\varepsilon )} _{\widetilde{\CO}(e^{-S_0/h})}+\underbrace{m_{-j,j}^\varepsilon }_{\widetilde{\CO}(e^{-S_0/h})}
\underbrace{(e_{-j}^\varepsilon |\partial _\varepsilon f_{j}^\varepsilon )}
_{\widetilde{\CO}(e^{-S_0/h})}\\
&=\widetilde{\CO}(e^{-2S_0/h}).
\end{split}\]
Combining this with (\ref{pmr.34}), (\ref{pmr.35}), we obtain
\begin{equation}\label{pmr.38}
\partial _\varepsilon m_{j,j}^\varepsilon =i\int W(x)|e_j^0(x)|^2dx+{\mathcal O}(\varepsilon h^{-N_0})+\widetilde{\CO}(e^{-2S_0/h})
\end{equation}
and by integration in $\varepsilon $ (cf. (\ref{pmr.30}), (\ref{pmr.8})), 
\begin{equation}\label{pmr.39}
a(\varepsilon )=\mu (h)+i\varepsilon \int W(x)|e_j^\varepsilon
(x)|^2dx+{\mathcal O}(\varepsilon ^2h^{-N_0})+\varepsilon
\widetilde{\CO}(e^{-2S_0/h}).
\end{equation}

\par By (\ref{pmr.33}), we have
\begin{equation}\label{pmr.42}
\partial _\varepsilon b,\, \partial _\varepsilon |b|=\widetilde{\CO } (e^{-S_0/h}),
\end{equation}
which implies that
\begin{equation}\label{pmr.40}
b(\varepsilon )=t(h)+\varepsilon \widetilde{\CO}(e^{-S_0/h}).
\end{equation}

\par The eigenvalues of ${{P_\varepsilon }_\vert}_{\CE_\varepsilon
  (h)}$ are equal to the ones of $M_\varepsilon $ (cf. (\ref{pmr.30})):
\begin{equation}\label{pmr.41}
\lambda_{\pm} =\Re a\pm\sqrt{|b|^2-(\Im a)^2}.
\end{equation}

Assume now that
\begin{equation}\label{pmr.43}
W>0\hbox{ on }U_1
\end{equation}
and hence also on a fixed neighborhood of that set. Since $e_1^0$ is
exponentially concentrated to a neighborhood of $U_1$, we conclude
that 
\begin{equation}\label{pmr.44}
\int W(x)|e_1^0(x)|^2 dx \asymp 1,
\end{equation}
and (\ref{pmr.38}) shows that
\begin{equation}\label{pmr.45}
\partial _\varepsilon \Im a =\int W|e_1^0|^2 dx+\CO(\varepsilon
h^{-N_0})+\widetilde{\CO}(e^{-2S_0/h})\asymp 1. 
\end{equation}

We can now discuss when the two eigenvalues (cf. (\ref{pmr.41})) are
real or complex. Since we are dealing with a ${\CP\CT}$ symmetric
operator, we know that the eigenvalues are either real or form complex
conjugate pairs. This means that $P_{-\varepsilon }=P_\varepsilon^* $ and
$P_\varepsilon $ have the same spectrum. Consequently, we can restrict
the attention to the region $0\le \varepsilon \ll h^{N_0}$. The reality
or not of our two eigenvalues is determined by the sign of 
\begin{equation}\label{pmr.46}
|b|-(\Im a)^2=(|b|+\Im a)(|b|-\Im a).
\end{equation}
Recall that $\Im a$ vanishes when $\varepsilon =0$ and is a strictly
increasing function of $\varepsilon $ whose derivative is $\asymp 1$,
while $b(\varepsilon )$ and its derivative with respect to $\varepsilon $ are
exponentially small. Thus, if we first consider the case when $t(h)=0$,
we see that both factors in (\ref{pmr.46}) vanish for $\varepsilon =0$
(corresponding to a double real eigenvalue of $P_0$) and for $\varepsilon
>0$ the first factor is positive while the second one is negative, so
the two eigenvalues in (\ref{pmr.41}) are non-real and complex
conjugate for $\varepsilon >0$.

Let now $t(h)\ne 0$ (but still exponentially small as we recalled in
(\ref{pmr.10})). Then the first factor in (\ref{pmr.46}) is strictly
positive for $0\le \varepsilon \ll h^{N_0}$. Denote the second factor by 
$f(\varepsilon )=|b|-\Im a$. Then $f(0)=|t(h)|>0$ and
\begin{equation}\label{pmr.47}
f'(\varepsilon )=-\int W(x) |e_j^0|^2dx+\CO(\varepsilon
h^{-N_0})+\widetilde{\CO}(e^{-S_0/h})\asymp -1.
\end{equation}
Hence there exists a point $\varepsilon _+(h)>0$ such that $f(\varepsilon
)>0$ for $0\le \varepsilon <\varepsilon _+$, $f(\varepsilon _+)=0$, $f(\varepsilon
)<0$ for $e_+<\varepsilon \ll h^{N_0}$. In the first region we have two
real and distinct eigenvalues, at the point $\varepsilon _+$ we have a
real  double eigenvalue, while in the last region we have a pair of complex
conjugate non-real eigenvalues.

In view of (\ref{pmr.10}) and (\ref{pmr.47}) we know that $\varepsilon
_+(h)=\widehat{\CO}(e^{-S_0/h})$ and if we restrict the attention
to the exponentially small interval $[0,2\varepsilon _+]$ we can sharpen
(\ref{pmr.47}) to 
$$
f'(\varepsilon )=-\int W(x)|e_j^0(x)|^2 dx+\widetilde{\CO}(e^{-S_0/h}),
$$
which implies that
\begin{equation}\label{pmr.48}
\varepsilon _+=(1+\widetilde{\CO}(e^{-S_0/h}))\frac{|t(h)|}{\int W(x)|e_1^0(x)|^2dx},
\end{equation}
and this finishes the proof of Theorem \ref{int.18}.

\appendix
\section{The spectrum of $P_{\varepsilon}$}
\label{app}

We recall from the Introduction that $P_0$ denotes the Friedrichs extension of the differential operator $-h^2\Delta+V_{0}$ from $\CC_0^\infty (M)$, $M=\R^n$ or a Riemannian compact manifold. In the first case
$$
\alpha=\liminf_{x\to \infty} V_{0}(x),
$$
and $\alpha=+\infty$ in the latter case.
We recall that the domain $\CD(P_0)$ of $P_{0}$ contains the form domain
$$
\{ u\in L^2(M);\, \int |\nabla u|^2dx+\int (V_0)_+(x)\vert u\vert^2 dx<+\infty  \},
$$
where $(V_0)_+(x)=\max (V_0(x),0)$.

\begin{prop}\label{app.1}
\sl
The spectrum of $P_\varepsilon $ in the left half-plane $\Re z<\alpha $
is discrete.
\end{prop}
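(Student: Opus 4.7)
The strategy is to show that $P_\varepsilon-z$ is Fredholm of index zero for every $z$ with $\Re z<\alpha$ and that it is invertible at one such $z$; the analytic Fredholm theorem then gives that $(z-P_\varepsilon)^{-1}$ is meromorphic on $\{\Re z<\alpha\}$ with finite-rank residues, which is exactly the discreteness claim.

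The core step is a G{\aa}rding-type semi-coercive estimate. Fix $z$ with $\Re z<\alpha$ and pick $\alpha'\in(\Re z,\alpha)$. By the definition of $\alpha$ as a $\liminf$ (with the convention $\alpha=+\infty$ when $M$ is compact), there is a compact set $K\subset M$ with $V_0\ge\alpha'$ on $M\setminus K$; take $\chi\in\CC_0^\infty(M)$ equal to $1$ near $K$. Since $W$ is real-valued, $\Re(i\varepsilon W u\,|\,u)=0$, so for every $u$ in the form domain of $P_0$,
$$
\Re((P_\varepsilon-z)u\,|\,u)=h^2\|\nabla u\|^2+\int_M(V_0(x)-\Re z)|u(x)|^2\,dx.
$$
Splitting $1=\chi^2+(1-\chi^2)$ and using $V_0\ge\alpha'$ on the support of $1-\chi^2$, together with the fact that $V_0$ is continuous hence bounded on the compact set $\mathrm{supp\,}\chi$, one obtains the estimate
$$
h^2\|\nabla u\|^2+(\alpha'-\Re z)\|u\|^2\le \Re((P_\varepsilon-z)u\,|\,u)+C_\chi\|\chi u\|^2,
$$
with $C_\chi$ finite. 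Combining this with Cauchy--Schwarz yields $\|u\|_{H^1}^2\le C(\|(P_\varepsilon-z)u\|^2+\|\chi u\|^2)$.

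Because $\mathrm{supp\,}\chi$ is compact, the map $u\mapsto\chi u$ is compact from the form domain of $P_0$ (endowed with its graph norm) into $L^2(M)$, by the Rellich--Kondrachov theorem. Standard abstract arguments then show that $P_\varepsilon-z$ has closed range and finite-dimensional kernel. The adjoint $P_\varepsilon^*-\bar z=P_{-\varepsilon}-\bar z$ enjoys exactly the same structure, so its kernel is also finite-dimensional, and $P_\varepsilon-z$ is Fredholm. To produce an invertibility point: $V_0$ is continuous with $\liminf V_0=\alpha>0$, hence bounded below, so $P_0$ is self-adjoint and bounded below, and $\|(P_0-z_0)^{-1}\|\le 1/M$ for $z_0=-iM$. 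For $M>|\varepsilon|\,\|W\|_{L^\infty}$, the Neumann expansion of $(1+i\varepsilon W(P_0-z_0)^{-1})^{-1}$ converges and makes $P_\varepsilon-z_0=(P_0-z_0)(1+i\varepsilon W(P_0-z_0)^{-1})$ invertible. Since $\Re z_0=0<\alpha$, this $z_0$ lies in our half-plane, and continuity of the Fredholm index on the connected set $\{\Re z<\alpha\}$ forces the index to vanish everywhere. The analytic Fredholm theorem applied to $z\mapsto P_\varepsilon-z$ then concludes the proof.

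The only potentially delicate point is the loss of self-adjointness of $P_\varepsilon$, which a priori blocks the usual spectral-form techniques. It is handled at once by the observation that the perturbation $i\varepsilon W$ contributes nothing to the real part of the quadratic form, so the coercivity question reduces to the classical one for the self-adjoint operator $P_0$.
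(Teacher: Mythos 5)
Your proposal is correct in substance, but it takes a genuinely different route from the paper. The paper never proves a Fredholm estimate for $P_\varepsilon -z$ directly: it introduces the auxiliary potential $V_{0,\beta }=\max (V_0,\beta )$ with $\beta <\alpha$, shows by a numerical-range estimate (plus a deformation in $\varepsilon$ from the self-adjoint case $\varepsilon =0$) that $P_{\varepsilon ,\beta }-z$ is bijective for $\Re z<\beta$, and then factors $P_\varepsilon -z=(P_{\varepsilon ,\beta }-z)\bigl(1+(P_{\varepsilon ,\beta }-z)^{-1}(V_0-V_{0,\beta })\bigr)$, where $V_0-V_{0,\beta }\in L^\infty _{\mathrm{comp}}$ is relatively compact; the classical analytic Fredholm theorem for a holomorphic family ``identity plus compact'' then gives discreteness in $\Re z<\beta$, and letting $\beta \to \alpha$ exhausts the half-plane. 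You instead prove a G{\aa}rding-type semi-Fredholm estimate for $P_\varepsilon -z$ itself (exploiting, as the paper implicitly does, that $i\varepsilon W$ drops out of the real part of the form), obtain finite-dimensional kernel and closed range for the operator and for its adjoint $P_{-\varepsilon }-\bar z$ via Rellich and the standard singular-sequence argument, fix the index at $0$ by invertibility at one point together with connectedness of the half-plane, and then invoke analytic Fredholm theory for holomorphic families of index-zero Fredholm operators. Your version avoids the auxiliary ``filled'' operator and treats compact and non-compact $M$ uniformly; the paper's version buys a reduction to the most elementary ingredients (numerical-range invertibility and the classical analytic Fredholm theorem), at the price of the auxiliary operator and the exhaustion in $\beta$. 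Two small repairs: your Neumann-series factorization should read $P_\varepsilon -z_0=\bigl(1+i\varepsilon W(P_0-z_0)^{-1}\bigr)(P_0-z_0)$, or $(P_0-z_0)\bigl(1+(P_0-z_0)^{-1}i\varepsilon W\bigr)$, not the order you wrote, since $W$ does not commute with $P_0$ (the bound $\|i\varepsilon W(P_0-z_0)^{-1}\|<1$ still gives invertibility); and the coercivity identity should be applied to $u\in \CD(P_0)$, which is contained in the form domain, since the Fredholm analysis concerns the operator with domain $\CD(P_0)$. Also avoid reusing the letter $M$ for the large constant in $z_0=-iM$, as $M$ already denotes the manifold.
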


\begin{proof}
When $M$ is compact this follows quite easily from the ellipticity of
$P_\varepsilon $ and the fact that there are always points with $\Re z\ll
0$ that do not belong to the spectrum. 

Thus, we consider the case when $M={\R}^n$. Let $\beta <\alpha $ be
arbitrarily close to $\alpha $ and put $V_{0,\beta }(x)=\max
(V_0(x),\beta )$ so that $V_{0,\beta }$ is equal to $V_0$ near
infinity or equivalently so that $\mathrm{supp\,}(V_{0,\beta }-V_0)$
is compact. Put $P_{\varepsilon ,\beta }=-h^2\Delta +V_{0,\beta
}(x)+i\varepsilon W(x)$. 

\par Let us first notice that $P_{\varepsilon ,\beta }-z:\CD(P_0)\to
L^2$ is bijective with bounded inverse when $\Re z<\beta $. Indeed,
the injectivity follows from the estimate
$$
\Re ((P_{\varepsilon ,\beta }-z)u|u)\ge ((V_{0,\beta }-\Re z)u|u)\ge
(\beta -\Re z)\|u\|^2,\ u\in \CD(P_0).
$$
Notice also from this that $P_{\varepsilon ,\beta }-z$ has a bounded left
inverse $R_{\varepsilon ,\beta }(z)$ of norm $\le (\beta -\Re z)^{-1}$ in
$\CL(L^2,L^2)$. When $\varepsilon =0$, $P_{0,\beta }$ is
self-adjoint and $P_{0,\beta }-z$ is bijective, so the left inverse
is a bilateral inverse. By a simple deformation argument in $\varepsilon
$ we get the claimed bijectivity for all $\varepsilon $. 

\par Still for $\Re z<\beta $ we write
\[
P_\varepsilon -z=P_{\varepsilon ,\beta }-z+(V_0-V_{0,\beta })=\begin{cases}
  (P_{\varepsilon ,\beta }-z)(1+(P_{\varepsilon ,\beta
  }-z)^{-1}(V_0-V_{0,\beta }))\\
  \hbox{and also}\\
  (1+(V_0-V_{0,\beta }) (P_{\varepsilon ,\beta }-z)^{-1}) (P_{\varepsilon
    ,\beta }-z).
\end{cases}
\]
Here $(V_0-V_{0,\beta }):\CD(P_0)\to L^2$ is compact, since
$V_0-V_{0,\beta }\in L^\infty _\mathrm{comp}$, so
\[\begin{split}&(P_{\varepsilon ,\beta }-z)^{-1}(V_0-V_{0,\beta }):\, \CD(P_0)\to
\CD(P_0),\\ &(V_0-V_{0,\beta }) (P_{\varepsilon ,\beta
}-z)^{-1}:\, L^2\to
L^2 \end{split}\]
are compact. The operator norms of these operators are
$\CO((\beta -\Re z)^{-1})$. Thus 
$$1+(P_{\varepsilon ,\beta
  }-z)^{-1}(V_0-V_{0,\beta }):\, \CD(P_0)\to \CD(P_0)$$
and 
$$
1+(V_0-V_{0,\beta })(P_{\varepsilon ,\beta }-z)^{-1}:\, L^2\to L^2
$$
are holomorphic families of Fredholm operators of index 0, bijective
when $\Re z\ll 0$. From these observations we get the
proposition in a fairly standard way.
\end{proof}


\end{document}